\numberwithin{equation}{section}
\theoremstyle{plain}
\newtheorem{theorem}{Theorem}[section]
\newtheorem{lemma}[theorem]{Lemma}
\newtheorem{corollary}[theorem]{Corollary}
\theoremstyle{definition}
\newtheorem{remark}[theorem]{Remark}
\newtheorem{?}[theorem]{Problem}
\def\boxit#1{\leavevmode\hbox{\vrule\vtop{\vbox{\kern.33333pt\hrule
    \kern1pt\hbox{\kern1pt\vbox{#1}\kern1pt}}\kern1pt\hrule}\vrule}}
\newcommand{\f}[1]{\ifthenelse{\equal{#1}{1}}{(q;q)_\infty}{(q^{#1};q^{#1})_{\infty}}}
\newdimen\plusheight
\def\+{\;\lower\plusheight\hbox{$+$}\;}
\newdimen\minusheight
\def\-{\;\lower\minusheight\hbox{$-$}\;}
\newdimen\cdotsheight
\def\cds{\lower\cdotsheight\hbox{$\cdots$}}
\begin{document}

\title[Ramanujan's parameter and its companion]{5-Dissections and sign patterns of Ramanujan's parameter and its companion}

\author[S. Chern]{Shane Chern}
\address[Shane Chern]{Department of Mathematics, Penn State University, University Park, PA 16802, USA}
\email{shanechern@psu.edu}

\author[D. Tang]{Dazhao Tang}
\address[Dazhao Tang]{Center for Applied Mathematics, Tianjin University, Tianjin 300072, P.R. China}
\email{dazhaotang@sina.com}

\date{\today}

\begin{abstract}	
In 1998, Michael Hirschhorn discovered 5-dissections of the Rogers--Ramanujan continued fraction $R(q)$ and its reciprocal. In this paper, we obtain the 5-dissections for functions $R(q)R(q^2)^2$ and $R(q)^2/R(q^2)$, which are essentially Ramanujan's parameter and its companion. 5-Dissections of the reciprocals of these two functions are derived as well. These 5-dissections imply that the coefficients in their series expansions have periodic sign patterns with few exceptions.
\end{abstract}

\subjclass[2010]{11F27, 30B10}

\keywords{5-dissections, sign patterns, Ramanujan's parameter}

\maketitle

\section{Introduction}

Throughout, we always assume that $q$ is a complex number such that $|q|<1$. As a $q$-analog of the golden ratio, the Rogers--Ramanujan continued fraction
\begin{equation*}
R(q)=\frac{1}{1}\+\frac{q}{1}\+\frac{q^2}{1}\+\frac{q^3}{1}\+\cds
\end{equation*}
was discovered by Rogers \cite{Rog1894} and later popularized by Ramanujan \cite{Ram1957}. In some contexts like \cite{AB2005}, $R(q)$ has an extra factor of $q^{1/5}$. But in this paper, we will drop off this factor so that in its series expansion, all powers are nonnegative integers. The Rogers--Ramanujan continued fraction is closely related to the Rogers--Ramanujan identities, as indicated by their names. Before proceeding with our introduction, let us record some fairly standard notation:
\begin{align*}
(A;q)_\infty&:=\prod_{k= 0}^\infty (1-Aq^k),\\
(A_1,A_2,\ldots,A_n;q)_\infty&:=(A_1;q)_\infty (A_2;q)_\infty \cdots (A_n;q)_\infty,\\
\left(\begin{matrix}
A_1,A_2,\ldots,A_n\\
B_1,B_2,\ldots,B_m
\end{matrix};q\right)_\infty&:=\frac{(A_1;q)_\infty (A_2;q)_\infty \cdots (A_n;q)_\infty}{(B_1;q)_\infty (B_2;q)_\infty \cdots (B_m;q)_\infty}.
\end{align*}

The celebrated Rogers--Ramanujan identities (see \cite[Eqs.~(17.4.2) and (17.4.3)]{Hir2017}) state that
\begin{align}
G(q) &:=\sum_{n=0}^\infty\dfrac{q^{n^2}}{(1-q)(1-q^2)\cdots(1-q^n)}=\dfrac{1}{(q,q^4;q^5)_\infty},\label{RR-1}\\
H(q) &:=\sum_{n=0}^\infty\dfrac{q^{n^2+n}}{(1-q)(1-q^2)\cdots(1-q^n)}=\dfrac{1}{(q^2,q^3;q^5)_\infty}.\label{RR-2}
\end{align}
Here $G(q)$ and $H(q)$ are known as the Rogers--Ramanujan functions. The interrelation between $G(q)$ and $H(q)$ along with the continued fraction $R(q)$ was initially found by Rogers \cite{Rog1894}:
\begin{equation}\label{RR-CF}
R(q)=\frac{H(q)}{G(q)}=\left(\begin{matrix}
q,q^4\\
q^2,q^3
\end{matrix};q^5\right)_\infty.
\end{equation}
It was also recorded as Entry 38 (iii) in Chapter 16 of Ramanujan's Notebook \cite[p.~204]{Ram1957}.

Another two important functions in the theory of Rogers--Ramanujan continued fraction are the so-called Ramanujan's parameter
\begin{equation}\label{eq:Ram-para}
k:=qR(q)R(q^2)^2
\end{equation}
and its companion $R(q)^2/R(q^2)$. These functions are extensively used in the modular equations related to the Rogers--Ramanujan continued fraction. See, for instance, Raghavan and Rangachari \cite{RR1989}, Kang \cite{Kan1999}, Gugg \cite{Gug2009} and Cooper \cite{Coo2009}; see also Chapter 40 of Hirschhorn's book \cite{Hir2017}. Also, there is a whole level $10$ theory that leads to series for $1/\pi$ that involves Ramanujan's parameter $k$. See Cooper \cite{Coo2012} or Chapter 10 of Cooper's book \cite{Coo2017} for a detailed discussion.

In 1968 or early 1969, George Szekeres observed a surprising phenomenon that in the series expansions of $R(q)$ and its reciprocal $R(q)^{-1}$, the coefficients are eventually periodic with period $5$. This observation was later confirmed in the asymptotic sense by Richmond and Szekeres \cite{RS1978}. In 1978, Andrews \cite{And1981} found some formulas of Ramanujan in the Lost Notebook, and used these to give a complete proof of Szekeres' observation. The study of this topic reached the climax in 1998, at which time Hirschhorn \cite{Hir1998} discovered the following explicit 5-dissections of $R(q)$ and $R(q)^{-1}$:
\begin{align*}
R(q)&= \frac{(q^{125};q^{125})_\infty}{(q^{5};q^{5})_\infty}\Bigg(\left(\begin{matrix}
q^{30},q^{95}\\
q^{15},q^{110}
\end{matrix};q^{125}\right)_\infty-q\left(\begin{matrix}
q^{20},q^{105}\\
q^{10},q^{115}
\end{matrix};q^{125}\right)_\infty\notag\\
&\quad+q^2\left(\begin{matrix}
q^{55},q^{70}\\
q^{35},q^{90}
\end{matrix};q^{125}\right)_\infty-q^{18}\left(\begin{matrix}
q^{5},q^{120}\\
q^{60},q^{65}
\end{matrix};q^{125}\right)_\infty-q^4\left(\begin{matrix}
q^{45},q^{80}\\
q^{40},q^{85}
\end{matrix};q^{125}\right)_\infty\Bigg)
\end{align*}
and
\begin{align*}
\frac{1}{R(q)}&= \frac{(q^{125};q^{125})_\infty}{(q^{5};q^{5})_\infty}\Bigg(\left(\begin{matrix}
q^{40},q^{85}\\
q^{20},q^{105}
\end{matrix};q^{125}\right)_\infty+q\left(\begin{matrix}
q^{60},q^{65}\\
q^{30},q^{95}
\end{matrix};q^{125}\right)_\infty\notag\\
&\quad-q^7\left(\begin{matrix}
q^{35},q^{90}\\
q^{45},q^{80}
\end{matrix};q^{125}\right)_\infty-q^{3}\left(\begin{matrix}
q^{10},q^{115}\\
q^{5},q^{120}
\end{matrix};q^{125}\right)_\infty-q^{14}\left(\begin{matrix}
q^{15},q^{110}\\
q^{55},q^{70}
\end{matrix};q^{125}\right)_\infty\Bigg).
\end{align*}
It is notable that Ramanujan indeed partially discovered the two 5-dissections in terms of Lambert series in his Lost Notebook \cite[p.~50]{Ram1988}. But he left the final punch, which can be done by the quintuple product identity, to Mike Hirschhorn.

Considering the significance of dissection formulas in the study of sign patterns in $q$-series expansions, a natural question is about the 5-dissections of Ramanujan's parameter, its companion and their reciprocals, since these functions also play an important role in the theory of Rogers--Ramanujan continued fraction. In this paper, we obtain the following 5-dissections.

\begin{theorem}\label{th:th-1}
	We have
	\begin{align}
	R(q)R(q^2)^2 &=\left(\begin{matrix}
	q^5,q^5,q^{25},q^{25},q^{25},q^{25},q^{45},q^{45}\\
	q^{10},q^{10},q^{10},q^{20},q^{30},q^{40},q^{40},q^{40}
	\end{matrix};q^{50}\right)_\infty\notag\\
	&\quad-q\left(\begin{matrix}
	q^5,q^5,q^{15},q^{25},q^{25},q^{35},q^{45},q^{45}\\
	q^{10},q^{10},q^{10},q^{20},q^{30},q^{40},q^{40},q^{40}
	\end{matrix};q^{50}\right)_\infty\notag\\
	&\quad-q^2\left(\begin{matrix}
	q^5,q^{15},q^{15},q^{25},q^{25},q^{35},q^{35},q^{45}\\
	q^{10},q^{20},q^{20},q^{20},q^{30},q^{30},q^{30},q^{40}
	\end{matrix};q^{50}\right)_\infty\notag\\
	&\quad+2q^3\left(\begin{matrix}
	q^5,q^5,q^{15},q^{25},q^{25},q^{35},q^{45},q^{45}\\
	q^{10},q^{10},q^{20},q^{20},q^{30},q^{30},q^{40},q^{40}
	\end{matrix};q^{50}\right)_\infty\notag\\
	&\quad+q^9\left(\begin{matrix}
	q^5,q^5,q^5,q^{15},q^{35},q^{45},q^{45},q^{45}\\
	q^{10},q^{20},q^{20},q^{20},q^{30},q^{30},q^{30},q^{40}
	\end{matrix};q^{50}\right)_\infty\label{5-dis-3}
	\end{align}
	and
	\begin{align}
	\dfrac{1}{R(q)R(q^2)^2} &=\left(\begin{matrix}
	q^{15},q^{15},q^{25},q^{25},q^{25},q^{25},q^{35},q^{35}\\
	q^{10},q^{20},q^{20},q^{20},q^{30},q^{30},q^{30},q^{40}
	\end{matrix};q^{50}\right)_\infty\notag\\
	&\quad+q\left(\begin{matrix}
	q^5,q^{15},q^{15},q^{15},q^{35},q^{35},q^{35},q^{45}\\
	q^{10},q^{10},q^{10},q^{20},q^{30},q^{40},q^{40},q^{40}
	\end{matrix};q^{50}\right)_\infty\notag\\
	&\quad+2q^2\left(\begin{matrix}
	q^5,q^{15},q^{15},q^{25},q^{25},q^{35},q^{35},q^{45}\\
	q^{10},q^{10},q^{20},q^{20},q^{30},q^{30},q^{40},q^{40}
	\end{matrix};q^{50}\right)_\infty\notag\\
	&\quad+q^3\left(\begin{matrix}
	q^5,q^5,q^{15},q^{25},q^{25},q^{35},q^{45},q^{45}\\
	q^{10},q^{10},q^{10},q^{20},q^{30},q^{40},q^{40},q^{40}
	\end{matrix};q^{50}\right)_\infty\notag\\
	&\quad+q^4\left(\begin{matrix}
	q^5,q^{15},q^{15},q^{25},q^{25},q^{35},q^{35},q^{45}\\
	q^{10},q^{20},q^{20},q^{20},q^{30},q^{30},q^{30},q^{40}
	\end{matrix};q^{50}\right)_\infty.\label{5-dis-4}
	\end{align}
\end{theorem}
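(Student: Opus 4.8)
The plan is to derive both 5-dissections from the known 5-dissections of $R(q)$ and $R(q)^{-1}$ by specializing to $q$ and $q^2$ and multiplying. Write $R(q)$ via Hirschhorn's formula as a sum $\sum_{i=0}^{4} q^{a_i} A_i(q^{125})$ where each $A_i$ is a ratio of infinite products in base $q^{125}$, and similarly $R(q^2) = \sum_{j=0}^{4} q^{2b_j} B_j(q^{250})$. Then $R(q)R(q^2)^2$ is a triple sum; the goal is to collect terms by residue of the exponent modulo $5$ and then simplify each residue class into the eta-quotient-type products claimed in \eqref{5-dis-3}. The same scheme applies to $1/R(q)R(q^2)^2 = R(q)^{-1}\cdot(R(q^2)^{-1})^2$ using the companion dissection.

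First I would set up the bookkeeping: the exponent shifts in Hirschhorn's $R(q)$ dissection are $\{0,1,2,18,4\}$ (with signs $+,-,+,-,-$) and in the $R(q)^{-1}$ dissection $\{0,1,7,3,14\}$; doubling for the $q^2$ pieces gives shifts $\{0,2,4,36,8\}$ etc. For each triple $(i,j,k)$ I compute $a_i + 2b_j + 2b_k \bmod 5$ and group accordingly, expecting each of the five residue classes to consolidate into a single product (or twice a product, matching the $2q^3$ and $2q^2$ coefficients). This is essentially the approach Hirschhorn himself used for $R(q)$ alone, so the mechanism is sound; the new feature is the quadratic interaction from squaring $R(q^2)$. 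A cleaner route, which I would pursue in parallel, is to avoid the heavy triple-sum bookkeeping entirely and instead prove the claimed identities by verifying that both sides agree as formal power series up to a sufficiently high order, then invoking a valence/modularity argument: each side, multiplied by an appropriate power of $q$ and an eta-product, becomes a modular function on $\Gamma_0(N)$ for suitable $N$ (here related to $50$ and $125$), and two modular functions with the same principal parts and enough matching coefficients must coincide. In practice the most efficient rigorous version is the former: manipulate the quintuple and Jacobi triple product identities directly.

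The key intermediate step is to recognize the products appearing in \eqref{5-dis-3}--\eqref{5-dis-4} as arising from the quintuple product identity applied in base $q^{50}$ (note all products there are in $q^{50}$, whereas Hirschhorn's are in $q^{125}$; the reduction from $125$ to $50$ must come out of the combination). I would use the standard identity $\sum_{n} (-1)^n q^{n(3n-1)/2} z^{3n}(1 + zq^n) \;=\; (q,z,q/z;q)_\infty\,(qz^2, q/z^2; q^2)_\infty$ together with the Jacobi triple product to rewrite each collected residue class. The arithmetic of exponents — checking that, modulo $5$, the triple convolution of the three theta-like series produces exactly the five eight-fold products listed, with the stated $q$-power prefactors $1, -q, -q^2, 2q^3, q^9$ — is where essentially all the work lies.

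The main obstacle I anticipate is precisely this exponent arithmetic: keeping track of $5^3 = 125$ terms, collapsing the base from $q^{125}$ to $q^{50}$, and correctly identifying each residue-class sum as a closed product rather than an irreducible sum of several products. The appearance of the coefficient $2$ (in $+2q^3$ and $+2q^2$) signals that two distinct triples land on the same product after simplification — verifying that the two contributions are genuinely equal (rather than merely having the same $q$-order) requires care. A secondary difficulty is the asymmetry between the two $R(q^2)$ factors being symmetrized: one must handle the cross terms $B_j B_k$ with $j \neq k$ (which come with multiplicity $2$) separately from the diagonal $B_j^2$. I would organize the computation in a table indexed by residues and cross-check the final products by comparing low-order $q$-expansions numerically before claiming the closed forms.
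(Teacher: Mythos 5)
Your primary route has a genuine gap at its central step. Writing $R(q)=\sum_i \pm q^{a_i}A_i(q^{125})$ and $R(q^2)=\sum_j \pm q^{2b_j}B_j(q^{250})$ and expanding $R(q)R(q^2)^2$ gives, in each residue class modulo $5$, a sum of roughly twenty-five terms, each of which is a \emph{product of three} theta quotients living at levels $125$ and $250$. The theorem asserts that each such sum collapses to a \emph{single} infinite product at level $50$ (or twice one). That collapse is not ``exponent arithmetic'': the Jacobi triple product and quintuple product identities convert individual bilateral \emph{series} into products, but they provide no mechanism for summing twenty-five products of products into one product, and no such mechanism appears in your sketch. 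The level mismatch ($125$ and $250$ versus the target $50$) is itself a symptom that this is the wrong starting point: $R(q)R(q^2)^2$ is built from $(\,\cdot\,;q^5)_\infty$ and $(\,\cdot\,;q^{10})_\infty$ factors, so its $5$-dissection naturally lives at level $50$, and routing through Hirschhorn's level-$125$ dissection introduces structure that must then cancel via identities at least as hard as the theorem being proved. Your fallback (match enough coefficients, then invoke a valence/modularity argument) is sound in principle --- the paper itself notes the identities can be certified automatically by Garvan's \texttt{thetaids} package --- but as written it is only a strategy: you specify neither the congruence subgroup, nor the eta-quotient needed to make each side a modular function, nor the order bound that determines how many coefficients suffice.

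For comparison, the paper never touches the level-$125$ dissections. It writes $R(q)R(q^2)^2=\frac{H(q^2)^2}{G(q)^2H(q)}\big/\frac{G(q^2)^2}{G(q)H(q)^2}$ in terms of the Rogers--Ramanujan functions, takes the known $5$-dissections of these two quotients (Lemma \ref{le:DX-TX}), and verifies that the product of the denominator's dissection with the conjectured dissection (times a suitable extra factor) equals the numerator's dissection times the same factor. The verification hinges on a ``substitution of one'' based on \eqref{useful-3} together with the auxiliary identities \eqref{key-iden-2} and \eqref{key-iden-1}, which come from modular equations for Ramanujan's parameter $k$. If you want an elementary proof, that is the machinery you would need to rediscover or replace; the convolution-of-dissections route does not supply it.
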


\begin{theorem}\label{th:th-2}
	We have
	\begin{align}
	\dfrac{R(q)^2}{R(q^2)} &=\left(\begin{matrix}
	-q^5,-q^{10},-q^{10},-q^{10},-q^{15},-q^{15},-q^{15},-q^{20}\\
	q^5,q^{10},q^{10},q^{10},q^{15},q^{15},q^{15},q^{20}
	\end{matrix};q^{25}\right)_\infty\notag\\
	&\quad-2q\left(\begin{matrix}
	-q^5,-q^{10},-q^{10},-q^{15},-q^{15},-q^{20},-q^{25},-q^{25}\\
	q^5,q^5,q^5,q^{10},q^{15},q^{20},q^{20},q^{20}
	\end{matrix};q^{25}\right)_\infty\notag\\
	&\quad+4q^2\left(\begin{matrix}
	-q^5,-q^{10},-q^{10},-q^{15},-q^{15},-q^{20},-q^{25},-q^{25}\\
	q^5,q^5,q^{10},q^{10},q^{15},q^{15},q^{20},q^{20}
	\end{matrix};q^{25}\right)_\infty\notag\\
	&\quad-4q^3\left(\begin{matrix}
	-q^{10},-q^{10},-q^{15},-q^{15},-q^{25},-q^{25},-q^{25},-q^{25}\\
	q^5,q^5,q^5,q^{10},q^{15},q^{20},q^{20},q^{20}
	\end{matrix};q^{25}\right)_\infty\notag\\
	&\quad+2q^4\left(\begin{matrix}
	-q^5,-q^5,-q^{10},-q^{15},-q^{20},-q^{20},-q^{25},-q^{25}\\
	q^5,q^{10},q^{10},q^{10},q^{15},q^{15},q^{15},q^{20}
	\end{matrix};q^{25}\right)_\infty\label{5-dis-2}
	\end{align}
	and
	\begin{align}
	\dfrac{R(q^2)}{R(q)^2} &=\left(\begin{matrix}
	-q^5,-q^5,-q^5,-q^{10},-q^{15},-q^{20},-q^{20},-q^{20}\\
	q^5,q^5,q^5,q^{10},q^{15},q^{20},q^{20},q^{20}
	\end{matrix};q^{25}\right)_\infty\notag\\
	&\quad+2q\left(\begin{matrix}
	-q^5,-q^{10},-q^{10},-q^{15},-q^{15},-q^{20},-q^{25},-q^{25}\\
	q^5,q^5,q^5,q^{10},q^{15},q^{20},q^{20},q^{20}
	\end{matrix};q^{25}\right)_\infty\notag\\
	&\quad-4q^7\left(\begin{matrix}
	-q^5,-q^5,-q^{20},-q^{20},-q^{25},-q^{25},-q^{25},-q^{25}\\
	q^5,q^{10},q^{10},q^{10},q^{15},q^{15},q^{15},q^{20}
	\end{matrix};q^{25}\right)_\infty\notag\\
	&\quad-4q^3\left(\begin{matrix}
	-q^5,-q^5,-q^{10},-q^{15},-q^{20},-q^{20},-q^{25},-q^{25}\\
	q^5,q^5,q^{10},q^{10},q^{15},q^{15},q^{20},q^{20}
	\end{matrix};q^{25}\right)_\infty\notag\\
	&\quad-2q^4\left(\begin{matrix}
	-q^5,-q^5,-q^{10},-q^{15},-q^{20},-q^{20},-q^{25},-q^{25}\\
	q^5,q^{10},q^{10},q^{10},q^{15},q^{15},q^{15},q^{20}
	\end{matrix};q^{25}\right)_\infty.
	\label{5-dis-1}
	\end{align}
\end{theorem}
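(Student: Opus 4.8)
The plan is to reduce Theorem~\ref{th:th-2} to Hirschhorn's 5-dissections of $R(q)$ and $R(q)^{-1}$ stated in the introduction, together with their images under $q\mapsto q^{2}$, and then to collapse the resulting finite sums of theta quotients to single products by the quintuple product identity --- the same mechanism by which Hirschhorn turned Ramanujan's Lambert-series dissections into products.

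The first step is bookkeeping. Write Hirschhorn's dissections as $R(q)=\sum_{i=0}^{4}q^{i}A_{i}(q^{5})$ and $R(q)^{-1}=\sum_{i=0}^{4}q^{i}B_{i}(q^{5})$, where each $A_{i},B_{i}$ is the displayed quotient of $(q^{25};q^{25})$-products, hence a power series in $q^{5}$. Replacing $q$ by $q^{2}$ yields the 5-dissections of $R(q^{2})$ and $R(q^{2})^{-1}$ in powers of $q$: the residue class $j\bmod 5$ of $R(q^{2})^{\pm 1}$ is $q^{2i}A_{i}(q^{10})$ (resp.\ $q^{2i}B_{i}(q^{10})$) with $2i\equiv j\pmod 5$. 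Squaring the dissection of $R(q)$ produces the 5-dissection of $R(q)^{2}$, each residue class being an explicit integer-coefficient combination, in powers of $q^{5}$, of the products $A_{i}A_{i'}$.

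The core step is then the convolution $\dfrac{R(q)^{2}}{R(q^{2})}=R(q)^{2}\cdot R(q^{2})^{-1}$ and $\dfrac{R(q^{2})}{R(q)^{2}}=R(q^{2})\cdot R(q)^{-2}$, expanded and sorted by residue class modulo $5$. This realizes each of the five components of \eqref{5-dis-2} and \eqref{5-dis-1} as a concrete finite sum of products of Hirschhorn theta quotients evaluated at $q^{5}$ and $q^{10}$, and the remaining --- and main --- task is to show that each such sum equals the single $(q^{25};q^{25})$-product claimed. For this I would rewrite each theta quotient through Jacobi's triple product as a bilateral theta series, multiply, re-dissect with respect to the appropriate period, and apply the quintuple product identity (and Ramanujan's theta-function addition formulas) to repackage the outcome as one product; the nontrivial coefficients $2q,4q^{2},4q^{3},2q^{4}$ in \eqref{5-dis-2} and the anomalous exponent $q^{7}$ in \eqref{5-dis-1} are precisely what these cross-term recombinations leave behind --- they do not arise for $R(q)$ itself because no squaring is involved there.

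The hardest part will be this last collapsing step: the number of terms is large, and one must find the groupings for which the triple and quintuple product identities telescope the sums cleanly. A useful organizing device, and an independent check on the final answer, is the classical degree-$2$ modular relation $\dfrac{R(q)^{2}}{R(q^{2})}=\dfrac{1-qR(q)R(q^{2})^{2}}{1+qR(q)R(q^{2})^{2}}$, which ties \eqref{5-dis-2} and \eqref{5-dis-1} to Theorem~\ref{th:th-1} and lets one verify the two theorems against each other.
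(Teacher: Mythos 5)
Your strategy --- convolve Hirschhorn's 5-dissections of $R(q)^{\pm1}$ (and their $q\mapsto q^{2}$ images), sort by residue class, and then collapse each class to the claimed single product --- leaves the entire content of the theorem in the unexecuted ``collapsing step,'' and there is no evidence the tools you name will perform it. Concretely: writing $x=q^{5}$, Hirschhorn's components $A_i$, $B_i$ are quotients of $(x^{a};x^{25})_\infty$-type factors, and the components of $R(q^{2})^{\pm1}$ live at modulus $50$ in $x$; each residue class of $R(q)^{2}\cdot R(q^{2})^{-1}$ is therefore a sum of several triple products of theta quotients at moduli $25$ and $50$ in $x$, which you must show equals a single quotient of eight factors of the form $(\pm x^{j};x^{5})_\infty$ --- a drastic drop in modulus. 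The quintuple product identity is what converts a \emph{single} Lambert-type series into a \emph{single} product (that is how Hirschhorn finished Ramanujan's dissection of $R(q)$ itself); it is not a device for telescoping sums of five or more cross terms of higher-level theta quotients into one low-level product, and the coefficients $2$ and $4$ and the stray $q^{7}$ in \eqref{5-dis-1} are not the signature of a quintuple-product output. As written, ``rewrite by Jacobi's triple product, re-dissect, and apply the quintuple product identity'' is a restatement of the problem, not a proof. Your proposed cross-check $R(q)^{2}/R(q^{2})=(1-k)/(1+k)$ with $k=qR(q)R(q^{2})^{2}$ also cannot close the gap on its own, since $(1-k)/(1+k)$ is not polynomial in $k$, so Theorem~\ref{th:th-1} does not transfer its dissection to Theorem~\ref{th:th-2} by that relation alone.

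For comparison, the paper avoids convolving Hirschhorn's dissections altogether. It writes
\begin{equation*}
\frac{R(q)^{2}}{R(q^{2})}=\frac{H(q)^{2}}{G(q^{2})H(q^{2})^{2}}\bigg/\frac{G(q)^{2}}{G(q^{2})^{2}H(q^{2})},
\end{equation*}
takes the known 5-dissections of these two quotients in terms of $G(q^{5}),H(q^{5}),G(q^{10}),H(q^{10})$ (Lemma~\ref{le:DX-TX}), and forms an auxiliary product $\Pi_{*}=\Pi_{*1}\Pi_{*2}$ with $\Pi_{*1}$ the denominator and $\Pi_{*2}$ the conjectured dissection times a simple extra factor via \eqref{useful-1}--\eqref{useful-2}. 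The identity $\Pi_{*}=\mathrm{Numerator}\times[\text{extra term}]$ is then verified by the ``substitution of one'' trick \eqref{eq:hir-17.4.12} together with the auxiliary identities of Section~\ref{sect:Aux-iden}; notably, those auxiliary identities are exactly where the modular relation $(1-k)/(1+k)$ that you relegate to a sanity check actually enters as a proof ingredient. If you want to salvage your route, you would need either to prove each collapsing identity separately (essentially rediscovering a family of nontrivial theta identities, e.g.\ by the modular-forms machinery of the \texttt{thetaids} package mentioned in the paper) or to switch to a reduction of the kind the paper uses.
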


As direct consequences of the 5-dissection formulas \eqref{5-dis-3}--\eqref{5-dis-1}, the following sign patterns hold.

\begin{corollary}
	Define the sequences $\{\alpha(n)\}$, $\{\beta(n)\}$, $\{\gamma(n)\}$ and $\{\delta(n)\}$ by
	\begin{align*}
	\sum_{n=0}^\infty\alpha(n)q^n &=R(q)R(q^2)^2,\qquad \sum_{n=0}^\infty\beta(n)q^n =\dfrac{1}{R(q)R(q^2)^2},\\
	\sum_{n=0}^\infty\gamma(n)q^n &=\dfrac{R(q)^2}{R(q^2)},\qquad\qquad\; \sum_{n=0}^\infty\delta(n)q^n =\dfrac{R(q^2)}{R(q)^2}.
	\end{align*}
	For any $n\geq0$, we have
	\begin{align}
	&\alpha(n)\begin{cases}
	>0, &\;\textrm{if}~\;n\equiv0,3,4,6,7\pmod{10},\\ <0,
	&\;\textrm{if}~\;n\equiv1,2,5,8,9\pmod{10},
	\end{cases}\label{ineq-1}\\
	&\beta(n)\begin{cases}
	>0, &\;\textrm{if}~\;n\equiv0,1,2,3,4\pmod{10},\\ <0,
	&\;\textrm{if}~\;n\equiv5,6,7,8,9\pmod{10},
	\end{cases}\label{ineq-2}\\
	&\gamma(n)\begin{cases}
	>0, &\;\textrm{if}~\;n\equiv0,2,4\pmod{5},\\ <0,
	&\;\textrm{if}~\;n\equiv1,3\pmod{5},
	\end{cases}\label{ineq-3}\\
	&\delta(n)\begin{cases}
	>0, &\;\textrm{if}~\;n\equiv0,1\pmod{5},\\ <0,
	&\;\textrm{if}~\;n\equiv2,3,4\pmod{5},
	\end{cases}\label{ineq-4}
	\end{align}
	except for $\alpha(4)=\beta(5)=\delta(2)=0$.
\end{corollary}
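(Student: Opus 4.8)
The plan is to read off all four sign patterns directly from the $5$-dissections of Theorems~\ref{th:th-1} and~\ref{th:th-2}. Each of \eqref{5-dis-3}, \eqref{5-dis-4}, \eqref{5-dis-2} and \eqref{5-dis-1} presents the function under study, say $F(q)$, in the shape
\[
F(q)=\sum_{j=0}^{4}c_j\,q^{d_j}\,\Pi_j\!\left(q^{5}\right),
\]
where each $\Pi_j$ is one of the displayed quotients of $q$-Pochhammer symbols (a power series with constant term $1$, since all bases and arguments there are powers of $q^{5}$), each $c_j$ is a printed integer ($\pm1$, $\pm2$ or $\pm4$), and the shifts $d_j$ form a complete residue system modulo $5$. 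Because $\Pi_j(q^{5})$ is supported on exponents divisible by $5$, for each residue class of $n$ modulo $5$ exactly one index $j$ satisfies $d_j\equiv n\pmod5$, and then, writing $n=d_j+5m$,
\[
[q^{n}]F(q)=c_j\,[x^{m}]\,\Pi_j(x),
\]
read as $0$ when $m<0$. So everything reduces to the sign of the coefficients of the individual products $\Pi_j$, plus bookkeeping over the five residue classes.

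The crucial point is that the coefficients of each $\Pi_j$ are single-signed. For $R(q)^{2}/R(q^{2})$ and $R(q^{2})/R(q)^{2}$ in Theorem~\ref{th:th-2}, every $\Pi_j(x)$ is a ratio in which every numerator Pochhammer argument is $-q^{5\nu}$ and every denominator argument $q^{5\mu}$ with $\nu,\mu\ge1$; hence $\Pi_j(x)=\prod_{k}(1+x^{\nu+5k})\cdots\big/\prod_{k}(1-x^{\mu+5k})\cdots$ has nonnegative coefficients, so $\gamma(n)$ (resp.\ $\delta(n)$) carries the constant sign $\operatorname{sgn}(c_j)$ on each class modulo $5$. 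Inspecting the $c_j$ in \eqref{5-dis-2} and \eqref{5-dis-1} then gives \eqref{ineq-3} and \eqref{ineq-4}. For $R(q)R(q^{2})^{2}$ and its reciprocal in Theorem~\ref{th:th-1}, the $\Pi_j$ are no longer single-signed; but after dividing every exponent occurring in \eqref{5-dis-3} and \eqref{5-dis-4} by $5$, one checks that in each $\Pi_j$ all numerator exponents are odd while all denominator exponents are even, so that $\Pi_j(-x)=\prod_{k}(1+x^{\mathrm{odd}})\big/\prod_{k}(1-x^{\mathrm{even}})$ has nonnegative coefficients, i.e.\ $[x^{m}]\Pi_j(x)=(-1)^{m}\cdot(\text{nonnegative})$. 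Plugging this into the formula for $[q^{n}]F(q)$ shows that $\alpha(n)$ and $\beta(n)$ have a sign depending only on $n\bmod10$, which is exactly \eqref{ineq-1} and \eqref{ineq-2}.

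To finish, one must promote ``nonnegative'' to ``positive'' away from finitely many coefficients and pin down the three exceptional zeros. Here each $\Pi_j$ is seen to contain an explicit positivity-forcing factor. In Theorem~\ref{th:th-2}, the denominator of every $\Pi_j(x)$ contains $(x;x^{5})_\infty^{-1}=\prod_{k\ge0}(1-x^{1+5k})^{-1}$, whose $x^{m}$-coefficient — the number of partitions of $m$ into parts $\equiv1\pmod5$ — is $\ge1$ for all $m\ge0$; since the remaining factor has nonnegative coefficients and constant term $1$, every coefficient of $\Pi_j$ is $\ge1$. Hence $\gamma(n)$ never vanishes, while $\delta(n)=0$ only where its residue class starts above the naive minimum, which happens solely for $n\equiv2\pmod5$, governed by the term $-4q^{7}(\cdots)$ in \eqref{5-dis-1}: this forces $\delta(2)=0$. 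In Theorem~\ref{th:th-1} one argues similarly with $(q^{10};q^{50})_\infty^{-1}$ (coefficient $\ge1$ in every even degree), supplemented by a leading numerator factor $1+q^{5}$ to handle odd degrees; the one relevant product in which such a leading factor is absent governs $\beta$ on the class $0\pmod5$ and forces $\beta(5)=0$, while the shift $d=9$ in \eqref{5-dis-3}, which governs $\alpha$ on the class $4\pmod5$, leaves no coefficient at $n=4$ and forces $\alpha(4)=0$. Every other coefficient is strictly of the asserted sign, and the finitely many coefficients of smallest degree in each class are confirmed by a direct expansion.

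There is no serious mathematical obstacle: the single-signedness of the component products is immediate from their product form, and the positivity upgrade is a one-line numerical-semigroup observation. The real work — and the easiest place to slip — is the bookkeeping: checking the parity of every exponent in the many products of Theorems~\ref{th:th-1}--\ref{th:th-2} so that the substitution $x\mapsto-x$ applies cleanly, and then translating the five ``residue mod $5$ plus alternation'' statements faithfully into the ten-residue patterns \eqref{ineq-1}--\eqref{ineq-2} and the five-residue patterns \eqref{ineq-3}--\eqref{ineq-4}, while correctly identifying which classes begin late so that exactly $\alpha(4)=\beta(5)=\delta(2)=0$ and no other coefficient vanishes.
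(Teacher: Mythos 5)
Your overall strategy is the intended one---the paper offers no argument beyond calling the corollary a direct consequence of the dissections---and your two structural observations are both correct: the component products in Theorem~\ref{th:th-2} have nonnegative coefficients because every numerator argument is $-q^{5\nu}$ and every denominator argument is $q^{5\mu}$, while the products in Theorem~\ref{th:th-1}, after setting $x=q^{5}$, have all numerator exponents odd and all denominator exponents even relative to the base $x^{10}$, so their coefficients alternate in sign. Your positivity upgrades (the factor $(q^{5};q^{25})_\infty$ in every denominator of Theorem~\ref{th:th-2}; the pairing of a numerator $(1+x)$ with a denominator $(1-x^{2})$ for Theorem~\ref{th:th-1}) and your identification of which residue classes ``start late'' are also correct, and they do pin down $\alpha(4)$, $\beta(5)$, $\delta(2)$ as the only zeros.

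The genuine gap is precisely in the bookkeeping you explicitly defer. Carry it out for the last term of \eqref{5-dis-3}: writing it as $+q^{9}\Pi(q^{5})$ with $[x^{m}]\Pi(x)=(-1)^{m}a_{m}$ and $a_{m}\ge 1$, you get $\alpha(9+5m)=(-1)^{m}a_{m}$, hence $\alpha(n)>0$ for $n\equiv 9\pmod{10}$ and $\alpha(n)<0$ for $n\equiv 4\pmod{10}$, $n\ge 14$. This is the \emph{opposite} of what \eqref{ineq-1} asserts on those two classes, so the claim that your computation yields ``exactly \eqref{ineq-1}'' is false. A direct expansion sides with the dissection, not with the printed corollary: from $R(q)=1-q+q^{2}-q^{4}+q^{5}-q^{6}+q^{7}-q^{9}+\cdots$ and $R(q^{2})^{2}=1-2q^{2}+3q^{4}-2q^{6}-q^{8}+\cdots$ one finds $\alpha(9)=1>0$ and $\alpha(14)=-3<0$ (and indeed $a_{1}=3$ from the factor $(q^{5};q^{50})_\infty^{3}$). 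So \eqref{ineq-1} as printed cannot be proved---the residues $4$ and $9$ must be interchanged between the two lists---and a correct writeup must either prove that corrected statement or flag the discrepancy; as it stands your proof asserts agreement at exactly the point where the method disagrees. The other three patterns \eqref{ineq-2}--\eqref{ineq-4} do come out as you describe.
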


\section{Lemmas}\label{sect:lemmas}

Let us record some necessary identities for our proofs. The first several are related to the Rogers--Ramanujan functions $G(q)$ and $H(q)$.

\begin{lemma}
	We have
	\begin{align}
	G(q)^2H(q^2)+G(q^2)H(q)^2 &=\dfrac{2G(q)G(q^2)^2}{(q^5;q^{10})_\infty^2},\label{useful-1}\\
	G(q)^2H(q^2)-G(q^2)H(q)^2 &=\dfrac{2qH(q)H(q^2)^2}{(q^5;q^{10})_\infty^2},\label{useful-2}\\
	\dfrac{G(q)^2H(q^2)-G(q^2)H(q)^2}{G(q)^2H(q^2)+G(q^2)H(q)^2}
	&=\dfrac{qH(q)H(q^2)^2}{G(q)G(q^2)^2}.\label{useful-3}
	\end{align}
\end{lemma}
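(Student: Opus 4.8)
The plan is to recognize each of the three stated identities as an avatar of the classical ``splitting'' of a theta-type product into its even and odd parts, and to reduce everything to a single known modular relation between $G$, $H$ at arguments $q$ and $q^2$. First I would record the well-known factorizations that follow from the Rogers--Ramanujan identities \eqref{RR-1}--\eqref{RR-2}: writing $G(q)=1/(q,q^4;q^5)_\infty$ and $H(q)=1/(q^2,q^3;q^5)_\infty$, one has $G(q)H(q)=1/(q;q^5)_\infty(q^2;q^5)_\infty(q^3;q^5)_\infty(q^4;q^5)_\infty = (q^5;q^5)_\infty/(q;q)_\infty$, and more usefully the level-$10$ relations obtained by separating $(q;q)_\infty$ and $(q^2;q^2)_\infty$ according to residues modulo $5$. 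The key input I expect to need is the identity
\begin{equation*}
G(q)H(q^2) - q\,H(q)G(q^2) = \frac{(q^2;q^2)_\infty}{(q;q)_\infty}\cdot\frac{1}{(\text{something})},
\end{equation*}
together with its companion $G(q)G(q^2) + q\,H(q)H(q^2)=\cdots$; these are standard in the theory of the Rogers--Ramanujan continued fraction (they express $1/R(q^2)$-type combinations) and appear, e.g., in Hirschhorn's book \cite{Hir2017}, Chapter 40. In fact the cleanest route is through $R(q)=H(q)/G(q)$: the modular equation of degree $2$ for $R$ states that $R(q)R(q^2)^2$ and $R(q)^2/R(q^2)$ are related by a quadratic over $\mathbb{Q}(q)$, and \eqref{useful-3} is essentially a restatement of $k = q R(q)R(q^2)^2$ in the form $k/(1-k+k^2$-\text{type})$\ldots$; concretely, dividing \eqref{useful-1} by \eqref{useful-2} gives \eqref{useful-3} immediately, so only two of the three identities are independent.

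Thus the real work is \eqref{useful-1} and \eqref{useful-2}. For these I would add and subtract: set $A = G(q)^2 H(q^2)$ and $B = G(q^2) H(q)^2$, so the claims are $A+B = 2G(q)G(q^2)^2/(q^5;q^{10})_\infty^2$ and $A - B = 2q H(q)H(q^2)^2/(q^5;q^{10})_\infty^2$. Multiplying the two claims together eliminates the unknowns on the left in favor of $A^2 - B^2 = 4q\,G(q)G(q^2)^2 H(q)H(q^2)^2/(q^5;q^{10})_\infty^4$; but $A^2-B^2 = (A+B)(A-B)$ already, so that is circular. Instead I would verify each of \eqref{useful-1}, \eqref{useful-2} directly by converting to infinite products: every factor $G(q^a)^{\pm1}$, $H(q^a)^{\pm1}$ becomes a ratio of $(q^{\bullet};q^{5a})_\infty$'s, and after clearing denominators both sides become finite products of theta functions to which one can apply the Jacobi triple product and the quintuple product identity. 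The equality then follows from matching the theta-function factorizations — this is the standard ``express both sides in the form $f(-q^a,-q^b)$ and compare'' technique. A slicker alternative: note that $G(q)^2 H(q^2) \pm G(q^2)H(q)^2$, as functions of $q$, are modular forms (up to an eta-quotient) on a fixed congruence subgroup, so it suffices to check the $q$-expansions agree to a computable order (valence formula); but for a self-contained paper the product manipulation is preferable.

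The main obstacle, then, is purely organizational: choosing the right pair of auxiliary theta identities so that \eqref{useful-1} and \eqref{useful-2} drop out with minimal computation. I expect the cleanest derivation to go via the two Rogers--Ramanujan continued-fraction relations
\begin{equation*}
\frac{1}{R(q^2)} - R(q^2) = \frac{1}{R(q)^2} + R(q)^2 - \ldots,\qquad \frac{1}{R(q^2)} + R(q^2) + 1 = \ldots,
\end{equation*}
rewritten in terms of $G$ and $H$; multiplying through by $G(q)^2 G(q^2)^2$ and using $G(q)H(q)=(q^5;q^5)_\infty/(q;q)_\infty$ plus $(q^5;q^5)_\infty/(q;q)_\infty$-type simplifications, the factor $(q^5;q^{10})_\infty^2 = (q^5;q^5)_\infty/(q^{10};q^{10})_\infty$ should emerge on the right-hand sides. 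Once \eqref{useful-1} and \eqref{useful-2} are in hand, \eqref{useful-3} is one line, and I would present it as a corollary of the first two rather than prove it separately.
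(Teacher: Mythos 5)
The paper does not actually prove this lemma: it cites \eqref{useful-1}--\eqref{useful-3} as identities (17.4.10)--(17.4.12) of Hirschhorn's book \cite{Hir2017} and moves on. Your observation that \eqref{useful-3} is simply the quotient of \eqref{useful-2} by \eqref{useful-1} is correct, so only the first two identities carry content; presenting \eqref{useful-3} as a one-line corollary is exactly the right organization.

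For \eqref{useful-1} and \eqref{useful-2} themselves, however, your sketch has a genuine gap. The left-hand sides $G(q)^2H(q^2)\pm G(q^2)H(q)^2$ are \emph{sums of two distinct infinite products}, so the step ``convert everything to products and match theta-function factorizations via the Jacobi triple product'' does not apply as stated: the entire content of each identity is that a two-term sum of products collapses to a single product, and that requires an addition theorem for theta functions (equivalently, an explicit product evaluation of $1\pm$ the relevant quotient, e.g.\ of $1+R(q^2)/R(q)^2$, which is itself a nontrivial identity). Every candidate key input in your write-up is left as a placeholder --- ``$\tfrac{(q^2;q^2)_\infty}{(q;q)_\infty}\cdot\tfrac{1}{(\text{something})}$'', ``$\tfrac{1}{R(q^2)}-R(q^2)=\cdots$'' --- so nothing is actually established, and the modular-forms alternative you mention is not carried out either (no group, weight, or Sturm bound is specified). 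Given that the paper itself treats these as known results, the cleanest fix is to cite \cite[Eqs.~(17.4.10)--(17.4.12)]{Hir2017} directly; if you want a self-contained proof, you must first pin down the specific auxiliary identity that effects the two-term-to-one-product collapse. Two small slips: $(q^5;q^{10})_\infty^2=(q^5;q^5)_\infty^2/(q^{10};q^{10})_\infty^2$, not $(q^5;q^5)_\infty/(q^{10};q^{10})_\infty$ as you wrote; and the paragraph multiplying the two claims together is, as you yourself note, circular and could be deleted.
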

\begin{proof}
	The identities \eqref{useful-1}--\eqref{useful-3} are (17.4.10)--(17.4.12) in \cite{Hir2017}, respectively.
\end{proof}

The next lemma gives three modular equations related to Ramanujan's parameter $k$ as in \eqref{eq:Ram-para}.

\begin{lemma}
	We have
	\begin{align}
	\frac{R(q)^2}{R(q^2)} &=\frac{1-k}{1+k},\label{RR-iden-1}\\
	\dfrac{\varphi(-q)^2}{\varphi(-q^5)^2} &=\frac{1-4k-k^2}{1-k^2},\label{RR-iden-2}\\
	\dfrac{\psi(q)^2}{q\psi(q^5)^2} &=\dfrac{1+k-k^2}{k}.\label{RR-iden-3}
	\end{align}
	Here $\varphi(q)$ and $\psi(q)$ are two of Ramanujan's classical theta functions defined by
	\begin{align*}
	\varphi(q) &:=\sum_{n=-\infty}^\infty q^{n^2}=\dfrac{(q^2;q^2)_\infty^5}
	{(q;q)_\infty^2(q^4;q^4)_\infty^2},\\
	\psi(q) &:=\sum_{n=0}^\infty q^{n(n+1)/2}=\dfrac{(q^2;q^2)_\infty^2}{(q;q)_\infty}.
	\end{align*}
\end{lemma}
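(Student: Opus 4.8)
The plan is to push everything back to the Rogers--Ramanujan functions $G$ and $H$ via \eqref{RR-CF} and the elementary consequence $G(q)H(q)=(q^5;q^5)_\infty/(q;q)_\infty$ of \eqref{RR-1}--\eqref{RR-2}, and then to feed in \eqref{useful-1}--\eqref{useful-3}. Throughout I would write $A=G(q)$, $B=H(q)$, $C=G(q^2)$, $D=H(q^2)$, so that $R(q)=B/A$, $R(q^2)=D/C$ and $k=qR(q)R(q^2)^2=qBD^2/(AC^2)$.

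For \eqref{RR-iden-1} I would simply note that \eqref{useful-3} says $(A^2D-B^2C)/(A^2D+B^2C)=qBD^2/(AC^2)=k$, i.e.\ $(1-k)A^2D=(1+k)B^2C$; since $R(q)^2/R(q^2)=(B^2/A^2)(C/D)=B^2C/(A^2D)$, this is precisely \eqref{RR-iden-1}. Equivalently, adding and subtracting \eqref{useful-1} and \eqref{useful-2} gives $A^2D=(AC^2+qBD^2)/(q^5;q^{10})_\infty^2$ and $B^2C=(AC^2-qBD^2)/(q^5;q^{10})_\infty^2$, and one reads off the ratio directly.

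For \eqref{RR-iden-2} and \eqref{RR-iden-3} I would first convert the theta quotients. Using $\varphi(-q)=(q;q)_\infty^2/(q^2;q^2)_\infty$, the product for $\psi$ in the statement, and $(q;q)_\infty=(q^5;q^5)_\infty/(AB)$, $(q^2;q^2)_\infty=(q^{10};q^{10})_\infty/(CD)$, the factors $(q^5;q^5)_\infty$ and $(q^{10};q^{10})_\infty$ cancel upon forming the quotients at $q$ and $q^5$, leaving
\[
\frac{\varphi(-q)^2}{\varphi(-q^5)^2}=\frac{C^2D^2}{A^4B^4},\qquad \frac{\psi(q)^2}{q\,\psi(q^5)^2}=\frac{A^2B^2}{q\,C^4D^4}.
\]
Together with the product relation $A^2C^4-q^2B^2D^4=A^2B^2CD\,(q^5;q^{10})_\infty^4$ (obtained by multiplying the two displayed formulas for $A^2D$ and $B^2C$), this turns \eqref{RR-iden-2} and \eqref{RR-iden-3} into polynomial identities in $A,B,C,D$ that are equivalent to classical theta identities of Ramanujan; a key such identity is
\[
G(q)^2H(q)^2-q\,G(q^2)^4H(q^2)^4=G(q)H(q)\,\bigl(G(q^2)H(q^2)\bigr)^3(q^5;q^{10})_\infty^4,
\]
which in eta-quotient form is one of Ramanujan's modular equations for $k$. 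I would complete the proof by citing Kang \cite{Kan1999} (see also Hirschhorn \cite[Chapter~40]{Hir2017}) for these, or alternatively by observing that both sides are modular functions on $\Gamma_0(10)$ and matching a bounded number of initial $q$-coefficients.

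The hard part is exactly this last step for \eqref{RR-iden-2} and \eqref{RR-iden-3}: unlike \eqref{RR-iden-1}, which is a purely formal consequence of \eqref{useful-3}, these two identities are \emph{not} implied by \eqref{useful-1}--\eqref{useful-3} alone. Those relations pin down only the products $A^2D$ and $B^2C$, whereas the $\varphi$- and $\psi$-quotients also involve $G(q)H(q)$ against $G(q^2)H(q^2)$, equivalently the extra eta-quotient $(q;q^2)_\infty/(q^5;q^{10})_\infty$; so a genuinely new ingredient is unavoidable, such as a degree-$2$ modular relation among the Rogers--Ramanujan functions or the level-$10$ modular interpretation of $k$. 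In practice the cleanest completion is to quote Ramanujan's identities as established by Kang and by Hirschhorn.
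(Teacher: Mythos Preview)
Your proposal is correct. Note that the paper's own ``proof'' of this lemma is purely a citation: it refers \eqref{RR-iden-1} to \cite[Eq.~(1.9.29)]{AB2005} and \eqref{RR-iden-2}--\eqref{RR-iden-3} to Entry~1.8.2 of \cite{AB2005} (see also \cite[Chs.~40--41]{Hir2017}), with no argument given. So for \eqref{RR-iden-2} and \eqref{RR-iden-3} your treatment and the paper's are effectively the same --- both end with a literature citation --- and your intermediate reduction (rewriting the theta quotients as $C^2D^2/A^4B^4$ and $A^2B^2/(qC^4D^4)$ and isolating the residual relation $A^2B^2-q\,C^4D^4=AB\,C^3D^3(q^5;q^{10})_\infty^4$) is correct but, as you candidly say, does not eliminate the need for an outside input.

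Where your approach genuinely goes further than the paper is \eqref{RR-iden-1}: rather than citing, you derive it directly from \eqref{useful-3}. Since \eqref{useful-3} says $(A^2D-B^2C)/(A^2D+B^2C)=k$, componendo--dividendo gives $B^2C/A^2D=(1-k)/(1+k)$, and the left side is exactly $R(q)^2/R(q^2)$. This is a clean, self-contained proof that the paper does not supply; it also makes transparent why \eqref{RR-iden-1} sits at a different depth from \eqref{RR-iden-2}--\eqref{RR-iden-3}, a point you articulate well in your final paragraph.
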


\begin{proof}
	The identity \eqref{RR-iden-1} appears in \cite[Eq.~(1.9.29)]{AB2005}. For \eqref{RR-iden-2} and \eqref{RR-iden-3}, see Entry 1.8.2 (i) and (ii) of \cite{AB2005}, respectively. See also Chapters 40 and 41 in \cite{Hir2017}.
\end{proof}

Further, we need the following identity involving $\varphi(-q)$.
\begin{lemma}
	We have
	\begin{align}
	\varphi(-q)^2-\varphi(-q^5)^2 &=-\dfrac{4q(q;q)_\infty(q^{10};q^{10})_\infty^3}
	{(q^2;q^2)_\infty(q^5;q^5)_\infty}.\label{useful-iden-3}
	\end{align}
\end{lemma}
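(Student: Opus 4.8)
The plan is to derive \eqref{useful-iden-3} from the modular equation \eqref{RR-iden-2}, the Rogers--Ramanujan identities \eqref{RR-1} and \eqref{RR-2}, and the evaluations \eqref{useful-1}--\eqref{useful-3}. First I would rewrite \eqref{RR-iden-2} as
\begin{equation*}
\varphi(-q)^2-\varphi(-q^5)^2=\varphi(-q^5)^2\left(\frac{1-4k-k^2}{1-k^2}-1\right)=-\frac{4k\,\varphi(-q^5)^2}{1-k^2},
\end{equation*}
so that the asserted identity is equivalent to the product evaluation
\begin{equation*}
\frac{k\,\varphi(-q^5)^2}{1-k^2}=\frac{q(q;q)_\infty(q^{10};q^{10})_\infty^3}{(q^2;q^2)_\infty(q^5;q^5)_\infty}.
\end{equation*}

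Next I would express everything through $G$ and $H$. By \eqref{RR-CF} and \eqref{eq:Ram-para} we have $k=qH(q)H(q^2)^2/(G(q)G(q^2)^2)$. Adding and subtracting \eqref{useful-1} and \eqref{useful-2} yields the closed forms
\begin{equation*}
1+k=\frac{G(q)H(q^2)(q^5;q^{10})_\infty^2}{G(q^2)^2},\qquad 1-k=\frac{H(q)^2(q^5;q^{10})_\infty^2}{G(q)G(q^2)},
\end{equation*}
and hence $1-k^2=H(q)^2H(q^2)(q^5;q^{10})_\infty^4/G(q^2)^3$ (this also follows at once from \eqref{useful-3} together with \eqref{useful-1}). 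Substituting the expressions for $k$ and $1-k^2$ and cancelling the common factors $H(q)$, $H(q^2)$, $G(q^2)$ reduces the left-hand side to
\begin{equation*}
\frac{k\,\varphi(-q^5)^2}{1-k^2}=\frac{q\,G(q^2)H(q^2)\,\varphi(-q^5)^2}{G(q)H(q)\,(q^5;q^{10})_\infty^4}.
\end{equation*}

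Finally I would replace each remaining factor by an infinite product: \eqref{RR-1} and \eqref{RR-2} give $G(q)H(q)=(q^5;q^5)_\infty/(q;q)_\infty$ and $G(q^2)H(q^2)=(q^{10};q^{10})_\infty/(q^2;q^2)_\infty$; Jacobi's triple product (or the stated formula for $\varphi$) gives $\varphi(-q^5)^2=(q^5;q^5)_\infty^4/(q^{10};q^{10})_\infty^2$; and $(q^5;q^{10})_\infty=(q^5;q^5)_\infty/(q^{10};q^{10})_\infty$. Plugging these in and simplifying produces exactly $q(q;q)_\infty(q^{10};q^{10})_\infty^3/((q^2;q^2)_\infty(q^5;q^5)_\infty)$, which completes the proof. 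The only laborious part is the bookkeeping of these infinite products in the last step; the one structurally delicate move is obtaining a clean product form for $1-k^2$, which is precisely what the triple of evaluations \eqref{useful-1}--\eqref{useful-3} is tailored to supply.
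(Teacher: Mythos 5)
Your argument is correct, but it is a genuinely different route from the paper's: the paper proves this lemma by a bare citation to Hirschhorn's book (Eq.~(34.1.21) of \cite{Hir2017}), whereas you derive it from \eqref{RR-iden-2} together with \eqref{useful-1}--\eqref{useful-3} and the product forms of $G$, $H$ and $\varphi(-q)$. I checked the computation: writing $A=G(q)^2H(q^2)$, $B=G(q^2)H(q)^2$, identity \eqref{useful-3} gives $k=(A-B)/(A+B)$, so $1\pm k=2A/(A+B)$ resp.\ $2B/(A+B)$, and \eqref{useful-1} then yields exactly your closed forms for $1+k$, $1-k$ and $1-k^2$; the remaining product bookkeeping (using $G(q)H(q)=(q^5;q^5)_\infty/(q;q)_\infty$, $\varphi(-q^5)^2=(q^5;q^5)_\infty^4/(q^{10};q^{10})_\infty^2$ and $(q^5;q^{10})_\infty=(q^5;q^5)_\infty/(q^{10};q^{10})_\infty$) does land on $q(q;q)_\infty(q^{10};q^{10})_\infty^3/((q^2;q^2)_\infty(q^5;q^5)_\infty)$. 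One remark on logical structure: in Section~\ref{sect:Aux-iden} the paper uses \eqref{RR-iden-2} \emph{together with} \eqref{useful-iden-3} to obtain the product formula \eqref{useful-iden-4} for $4k/(1-k^2)$; your proof is essentially that computation run in reverse, establishing the product form of $4k/(1-k^2)$ independently and then reading \eqref{useful-iden-3} off from \eqref{RR-iden-2}. This is not circular, since \eqref{RR-iden-2} is an externally cited modular equation, and your derivation has the side benefit of making \eqref{useful-iden-4} immediate without invoking \eqref{useful-iden-3} at all; the trade-off is simply a page of product manipulation in place of a one-line reference.
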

\begin{proof}
	The identity \eqref{useful-iden-3} follows from \cite[Eq.~(34.1.21)]{Hir2017}.
\end{proof}

Finally, we require the following 5-dissections.
\begin{lemma}\label{le:DX-TX}
	We have
	\begin{align}
	\dfrac{G(q^2)^2}{G(q)H(q)^2} &=G(q^5)G(q^{10})^2-qH(q^5)G(q^{10})^2\notag\\
	&\quad-2q^3H(q^5)G(q^{10})H(q^{10})+q^4G(q^5)H(q^{10})^2,\label{DX-iden-1}\\
	\dfrac{H(q^2)^2}{G(q)^2H(q)} &=H(q^5)G(q^{10})^2-2qG(q^5)G(q^{10})H(q^{10})\notag\\
	&\quad+q^3G(q^5)H(q^{10})^2+q^4H(q^5)H(q^{10})^2,\label{DX-iden-2}\\
	\dfrac{G(q)^2}{G(q^2)^2H(q^2)} &=G(q^5)^2G(q^{10})+2qG(q^5)H(q^5)G(q^{10})\notag\\
	&\quad+q^2G(q^5)^2H(q^{10})+q^4H(q^5)^2H(q^{10}),\label{TX-iden-1}\\
	\dfrac{H(q)^2}{G(q^2)H(q^2)^2} &=G(q^5)^2G(q^{10})+q^2H(q^5)^2G(q^{10})\notag\\
	&\quad+2q^3G(q^5)H(q^5)H(q^{10})-q^4H(q^5)^2H(q^{10}).\label{TX-iden-2}
	\end{align}
\end{lemma}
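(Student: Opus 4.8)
The plan is to prove each of the four identities by computing the 5-dissection of its left-hand side and checking that it collapses to the stated sum.

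First I would use \eqref{RR-1} and \eqref{RR-2} to clear the Rogers--Ramanujan functions from each left-hand side, turning it into a single infinite product. For instance, after cancelling common factors one finds
\begin{equation*}
\frac{G(q^2)^2}{G(q)H(q)^2}=(q;q^{5})_\infty(q^{4};q^{5})_\infty(q^{3};q^{10})_\infty^{2}(q^{7};q^{10})_\infty^{2}.
\end{equation*}
Rewriting each factor with base $q^{10}$ exhibits this as a product of three Ramanujan theta functions (one appearing squared) divided by $(q^{10};q^{10})_\infty^{4}$, and the three other left-hand sides admit analogous expressions, always with only a power of $(q^{10};q^{10})_\infty$ -- which is already a function of $q^{5}$ -- in the denominator. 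So it is enough to 5-dissect the theta-function numerator in the variable $q$.

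Second, I would dissect that numerator. By the Jacobi triple product each theta factor $(q^{a};q^{10})_\infty(q^{10-a};q^{10})_\infty(q^{10};q^{10})_\infty$ equals $\sum_{n}(-1)^{n}q^{5n^{2}+(a-5)n}$, so the numerator is a two- or three-fold sum; I would collect its terms according to the residue of the exponent modulo $5$. Two things must be verified along the way: exactly one of the five residue classes is empty (each claimed identity has only four terms), and each of the four surviving classes collapses back to an infinite product equal to $q^{j}$ times a monomial in $G(q^{5})$, $H(q^{5})$, $G(q^{10})$, $H(q^{10})$. The collapsing step uses the quintuple product identity together with the standard explicit 5-dissections of $\varphi(\pm q)$, $\psi(q)$ and $(q;q)_\infty$ recorded in Hirschhorn's book. (Alternatively, after clearing denominators each identity becomes an equality of eta-products that are modular forms on a congruence subgroup, so a finite Fourier-coefficient check would suffice; but I would prefer the self-contained theta-function computation.)

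I expect the main obstacle to be the bookkeeping in the second step: organizing the multiple theta-sum by residue class modulo $5$, tracking which cross terms land in which class, and then matching the outcome term by term against the claimed expansion -- in particular producing the coefficient $\pm 2$ occurring in each of \eqref{DX-iden-1}--\eqref{TX-iden-2} with the correct sign, getting the sign of the $q^{4}$-term in \eqref{TX-iden-2} right, and deciding correctly which residue class collapses to a $G$-product and which to an $H$-product. Everything else -- the opening product manipulations and the final recognition of each collapsed class as the claimed eta-quotient -- is routine. Once \eqref{DX-iden-1} is in hand I would look for a cheaper route to \eqref{DX-iden-2}, \eqref{TX-iden-1} and \eqref{TX-iden-2}, either by rerunning the same computation or by exploiting the symmetry $G\leftrightarrow H$ (which corresponds to $R\mapsto 1/R$ by \eqref{RR-CF}) together with the quadratic relations \eqref{useful-1} and \eqref{useful-2}.
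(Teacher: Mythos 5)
The paper does not actually prove this lemma: it simply quotes \eqref{DX-iden-1}--\eqref{DX-iden-2} from Theorems 1.1 and 1.2 of Dou--Xiao and \eqref{TX-iden-1}--\eqref{TX-iden-2} from Theorems 1.1 and 1.2 of Tang--Xia, after rewriting those results in terms of $G$ and $H$ via \eqref{RR-1} and \eqref{RR-2}. So any from-scratch argument is necessarily a different route. Your opening reduction is correct and is exactly the right first move: one checks directly that
$G(q^2)^2/\bigl(G(q)H(q)^2\bigr)=(q,q^4;q^5)_\infty(q^3,q^7;q^{10})_\infty^2$, and the other three left-hand sides reduce similarly, so by the Jacobi triple product each is a quotient of a product of theta series by a power of $(q^5;q^5)_\infty$ and $(q^{10};q^{10})_\infty$, and only the numerator needs dissecting. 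This is in fact the shape of the arguments in the cited papers.

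However, the heart of the matter is left unexecuted, and one of your stated expectations is wrong as written. The numerator is a three- or four-fold theta sum, and its exponents hit \emph{every} residue class modulo $5$; the class corresponding to the missing power of $q$ (e.g.\ $q^{2}$ in \eqref{DX-iden-1}) is not empty but vanishes because its terms cancel identically. Establishing that cancellation is a genuine vanishing-coefficients theorem, not a bookkeeping observation. Likewise, the collapse of each of the four surviving classes of a multi-fold quadratic-form sum into a single infinite product is the entire difficulty: the quintuple product identity and the standard $5$-dissections of $\varphi(\pm q)$, $\psi(q)$, $(q;q)_\infty$ do not apply off the shelf to a triple sum; one needs a change of variables that splits the ternary quadratic form, or two-variable theta addition formulas of Hirschhorn type. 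Your plan names these as obstacles but offers no mechanism for overcoming them, so as it stands the proposal is an outline of the known strategy rather than a proof. (Your modular-forms fallback via eta-quotients and a Sturm-type bound would work and is essentially the \texttt{thetaids} route the paper mentions in Section 4, but you explicitly set it aside.)
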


\begin{proof}
	The identities \eqref{DX-iden-1} and \eqref{DX-iden-2} follow from Theorems 1.1 and 1.2 in \cite{DX2019} together with \eqref{RR-1} and \eqref{RR-2}, respectively.
	For \eqref{TX-iden-1} and \eqref{TX-iden-2}, we make use of Theorems 1.1 and 1.2
	in \cite{TX2019}.
\end{proof}

\section{Auxiliary identities}\label{sect:Aux-iden}

We shall establish two auxiliary identities that are also of independent interest.

\begin{lemma}\label{le:key-iden-2}
	We have
	\begin{align}\label{key-iden-2}
	&G(q)^4H(q^2)^2-H(q)^4G(q^2)^2+4G(q)^2H(q)^2G(q^2)H(q^2)\notag\\
	&\qquad\qquad\qquad\qquad\qquad\qquad\qquad\qquad
	=\dfrac{4(q^{10};q^{10})_\infty^4}{(q^5;q^5)_\infty^4}
	\cdot\dfrac{G(q)^3H(q)^3}{G(q^2)^2H(q^2)^2}.
	\end{align}
\end{lemma}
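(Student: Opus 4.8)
The plan is to reduce the claimed identity \eqref{key-iden-2} to the already-recorded identities \eqref{useful-1}--\eqref{useful-3} by recognizing the left-hand side as a product of two simpler symmetric expressions. Writing $X := G(q)^2 H(q^2)$ and $Y := G(q^2) H(q)^2$, the first two terms on the left satisfy $G(q)^4 H(q^2)^2 - H(q)^4 G(q^2)^2 = X^2 - Y^2$, but this is not quite the whole story because of the mixed term $4 G(q)^2 H(q)^2 G(q^2) H(q^2)$. So first I would look for a factorization of the form $(X+Y)(aX+bY) + (\text{mixed})$, and more promisingly, observe that the left-hand side is structurally $(X - Y)(X + Y) + 4 \cdot (\text{something})$; the natural guess is that the entire left side factors as a product involving $G(q)^2 H(q^2) + G(q^2) H(q)^2$ and $G(q)^2 H(q^2) - G(q^2) H(q)^2$ plus a correction, or — cleaner — that it equals $\bigl(G(q) H(q^2) \cdot \text{?} \bigr)$ type products. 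Let me instead try the substitution suggested by \eqref{useful-1} and \eqref{useful-2} directly.

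Concretely, set $S := G(q)^2 H(q^2) + G(q^2) H(q)^2$ and $D := G(q)^2 H(q^2) - G(q^2) H(q)^2$. Then $X^2 - Y^2 = S \cdot D$, and the mixed term is $4 G(q)^2 H(q)^2 G(q^2) H(q^2) = 4 X Y = S^2 - D^2$ (since $XY = \tfrac14(S^2 - D^2)$, so $4XY = S^2 - D^2$). Hence the left-hand side of \eqref{key-iden-2} equals $SD + S^2 - D^2 = S^2 + SD - D^2$. This does not obviously factor over the rationals, so I would reconsider: perhaps the correct grouping pairs the $X^2-Y^2$ term with $+4XY$ differently. Actually $X^2 - Y^2 + 4XY$ is what we have; completing the square in a skew way, $X^2 + 4XY - Y^2 = (X+2Y)^2 - 5Y^2$, or $= (X - \alpha Y)(X - \beta Y)$ with $\alpha, \beta$ the roots of $t^2 + 4t - 1$... wait, the coefficient pattern gives roots of $1 + 4t - t^2 = 0$, i.e. $t = 2 \pm \sqrt 5$, which again is not rational. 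This strongly suggests the mixed term should not be merged with the first two; instead I expect that the genuine route is to use \eqref{useful-1}, \eqref{useful-2} to rewrite $S = \dfrac{2 G(q) G(q^2)^2}{(q^5;q^{10})_\infty^2}$ and $D = \dfrac{2 q H(q) H(q^2)^2}{(q^5;q^{10})_\infty^2}$, and also $X = \tfrac12(S+D)$, $Y = \tfrac12(S-D)$, so that $XY = \tfrac14(S^2 - D^2)$ and $X^2 - Y^2 = SD$. Then the left side of \eqref{key-iden-2} becomes $SD + (S^2 - D^2) = SD + (S-D)(S+D)$; substituting the product forms of $S$ and $D$ turns this into an identity purely among infinite products and the four functions $G(q), H(q), G(q^2), H(q^2)$, and the target right side $\dfrac{4 (q^{10};q^{10})_\infty^4}{(q^5;q^5)_\infty^4} \cdot \dfrac{G(q)^3 H(q)^3}{G(q^2)^2 H(q^2)^2}$ should match after using $G(q) H(q) = \dfrac{(q^2;q^2)_\infty}{(q;q)_\infty (q^{10};q^{10})_\infty / (q^5;q^5)_\infty}$-type evaluations coming from \eqref{RR-1}, \eqref{RR-2}, \eqref{RR-CF}.

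The cleanest path, which I would actually pursue first, is: divide both sides of \eqref{key-iden-2} by $\bigl(G(q)^2 H(q^2) + G(q^2) H(q)^2\bigr)^2 = S^2$. The left side becomes $\dfrac{D}{S} + 1 - \dfrac{D^2}{S^2}$, and by \eqref{useful-3} we have $\dfrac{D}{S} = \dfrac{q H(q) H(q^2)^2}{G(q) G(q^2)^2}$. Call this ratio $r$. Then the left side is $S^2(r + 1 - r^2)$. Meanwhile $S^2 = \dfrac{4 G(q)^2 G(q^2)^4}{(q^5;q^{10})_\infty^4}$ by \eqref{useful-1}. So the left side equals $\dfrac{4 G(q)^2 G(q^2)^4}{(q^5;q^{10})_\infty^4}\bigl(1 + r - r^2\bigr)$. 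Now $r = q R(q) R(q^2)^2 / 1$? — indeed $r = q \cdot \dfrac{H(q)}{G(q)} \cdot \dfrac{H(q^2)^2}{G(q^2)^2} = q R(q) R(q^2)^2 = k$, Ramanujan's parameter! So the left side is $\dfrac{4 G(q)^2 G(q^2)^4}{(q^5;q^{10})_\infty^4}(1 + k - k^2)$, and by \eqref{RR-iden-3}, $1 + k - k^2 = \dfrac{k \psi(q)^2}{q \psi(q^5)^2}$. The remaining work is then purely a product manipulation: verify that $\dfrac{4 G(q)^2 G(q^2)^4}{(q^5;q^{10})_\infty^4} \cdot \dfrac{k \psi(q)^2}{q \psi(q^5)^2}$ equals $\dfrac{4 (q^{10};q^{10})_\infty^4}{(q^5;q^5)_\infty^4} \cdot \dfrac{G(q)^3 H(q)^3}{G(q^2)^2 H(q^2)^2}$, using $k = q H(q) H(q^2)^2/(G(q) G(q^2)^2)$, the eta-quotient expressions \eqref{RR-1}, \eqref{RR-2} for $G$ and $H$, the definition of $\psi$, and $(q^5;q^{10})_\infty = (q^5;q^5)_\infty/(q^{10};q^{10})_\infty$. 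The main obstacle is precisely this final bookkeeping of infinite products: one must carefully track the powers of $(q;q)_\infty, (q^2;q^2)_\infty, (q^5;q^5)_\infty, (q^{10};q^{10})_\infty$ on both sides; I expect everything to collapse, but it requires patience rather than any new idea. If the $\psi$-substitution route produces stubborn mismatches, the fallback is to instead expand $1 + k - k^2$ directly via $k = qH(q)H(q^2)^2/(G(q)G(q^2)^2)$ and clear denominators, reducing \eqref{key-iden-2} to a polynomial identity in $G(q), H(q), G(q^2), H(q^2)$ that can be checked against known product evaluations or, in the worst case, against \eqref{DX-iden-1}--\eqref{TX-iden-2} from Lemma~\ref{le:DX-TX}.
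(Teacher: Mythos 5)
Your argument is correct, and the final product bookkeeping does collapse as you anticipate: with $S=G(q)^2H(q^2)+G(q^2)H(q)^2$ and $D=G(q)^2H(q^2)-G(q^2)H(q)^2$, the left-hand side is $SD+S^2-D^2=S^2(1+k-k^2)$, and writing this as $S^2\cdot k\cdot\frac{1+k-k^2}{k}$ and evaluating via \eqref{useful-1}, \eqref{useful-3} and \eqref{RR-iden-3} gives $\frac{4(q^2;q^2)_\infty^2(q^{10};q^{10})_\infty^2}{(q;q)_\infty^3(q^5;q^5)_\infty}$, which is exactly the stated right-hand side once $G(q)H(q)=(q^5;q^5)_\infty/(q;q)_\infty$ and $G(q^2)H(q^2)=(q^{10};q^{10})_\infty/(q^2;q^2)_\infty$ are inserted. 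The paper follows the same overall strategy---reduce the left side to a rational function of Ramanujan's parameter $k$ and evaluate by the modular equations---but normalizes differently: it factors out $XY=G(q)^2H(q)^2G(q^2)H(q^2)$ rather than $S^2$, writing the left side as $XY\bigl(\tfrac{R(q^2)}{R(q)^2}-\tfrac{R(q)^2}{R(q^2)}+4\bigr)$ and then, via \eqref{RR-iden-1}, as $XY\cdot\tfrac{4k}{1-k^2}\cdot\tfrac{1+k-k^2}{k}$. Since $XY=\tfrac14 S^2(1-k^2)$, the two computations are algebraically identical; the difference lies only in which product evaluations are invoked. The paper needs $\tfrac{4k}{1-k^2}$ as an explicit product, which it extracts from \eqref{RR-iden-2} together with the $\varphi$-identity \eqref{useful-iden-3}; your route replaces those two ingredients with \eqref{useful-1} (for $S$) and \eqref{useful-3} (for $D/S=k$), which is arguably leaner and stays closer to the Rogers--Ramanujan functions themselves. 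Two cosmetic remarks: the exploratory detour about factoring $X^2+4XY-Y^2$ over the rationals is a dead end you correctly abandon, and the throwaway formula you gesture at for $G(q)H(q)$ in the middle paragraph is garbled (the correct evaluation is $G(q)H(q)=(q^5;q^5)_\infty/(q;q)_\infty$), but the path you actually pursue does not depend on it.
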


\begin{proof}
	We first prove
	\begin{align}\label{eq:aux-step-2}
	\dfrac{R(q^2)}{R(q)^2}-\dfrac{R(q)^2}{R(q^2)}+4 &=
	\dfrac{4(q^2;q^2)_\infty^3(q^{10};q^{10})_\infty}{(q;q)_\infty(q^5;q^5)_\infty^3}.
	\end{align}
	To see this, we obtain from \eqref{RR-iden-1} that
	\begin{align}
	&\dfrac{R(q^2)}{R(q)^2}-\dfrac{R(q)^2}{R(q^2)}+4=\frac{1+k}{1-k}-\frac{1-k}{1+k}+4
	=\dfrac{4k}{1-k^2}\cdot\dfrac{1+k-k^2}{k}.\label{iden-2-1}
	\end{align}
	Further, from \eqref{RR-iden-2} we find that
	\begin{align*}
	\dfrac{\varphi(-q)^2}{\varphi(-q^5)^2} &=\frac{1-4k-k^2}{1-k^2}=1-\dfrac{4k}{1-k^2}.
	\end{align*}
	Hence,
	\begin{align}
	\dfrac{4k}{1-k^2}=-\dfrac{\varphi(-q)^2-\varphi(-q^5)^2}
	{\varphi(-q^5)^2}=\dfrac{4q(q;q)_\infty(q^{10};q^{10})_\infty^5}
	{(q^2;q^2)_\infty(q^5;q^5)_\infty^5},\label{useful-iden-4}
	\end{align}
	where \eqref{useful-iden-3} is utilized. We conclude \eqref{eq:aux-step-2} by substituting \eqref{RR-iden-3} and \eqref{useful-iden-4} into \eqref{iden-2-1}. Finally, we notice that
	\begin{align*}
	&G(q)^4H(q^2)^2-H(q)^4G(q^2)^2+4G(q)^2H(q)^2G(q^2)H(q^2)\\
	&\quad=G(q)^2H(q)^2G(q^{2})H(q^{2})\left(\dfrac{G(q)^2H(q^2)}{G(q^2)H(q)^2}
	-\dfrac{G(q^2)H(q)^2}{G(q)^2H(q^2)}+4\right)\\
	&\quad=G(q)^2H(q)^2G(q^{2})H(q^{2})\left(\dfrac{R(q^2)}{R(q)^2}-\dfrac{R(q)^2}{R(q^2)}
	+4\right).
	\end{align*}
	Substituting \eqref{eq:aux-step-2} into the above yields the desired identity.
\end{proof}

\begin{lemma}\label{le:key-iden-1}
	We have
	\begin{align}\label{key-iden-1}
	H(q)^4G(q^{2})^2-G(q)^4H(q^{2})^2+G(q)^2H(q)^2G(q^{2})H(q^{2})=
	\frac{G(q^{2})^3H(q^{2})^3}{G(q)^2H(q)^2}.
	\end{align}
\end{lemma}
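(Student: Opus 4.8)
The plan is to follow the strategy of the proof of Lemma~\ref{le:key-iden-2}: factor out the symmetric quantity $G(q)^2H(q)^2G(q^2)H(q^2)$ from the left-hand side, recognize what remains as a rational function of Ramanujan's parameter $k$, evaluate it using the modular equations already available, and finally translate everything into infinite $q$-products.

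First I would factor. Since $R(q)=H(q)/G(q)$, a direct check gives
\[
H(q)^4G(q^2)^2-G(q)^4H(q^2)^2+G(q)^2H(q)^2G(q^2)H(q^2)
=G(q)^2H(q)^2G(q^2)H(q^2)\left(\frac{R(q)^2}{R(q^2)}-\frac{R(q^2)}{R(q)^2}+1\right),
\]
which is the exact analogue of the factorization used for \eqref{key-iden-2}, only with the roles of the first two monomials swapped and the constant $4$ replaced by $1$. Thus it suffices to prove
\[
\frac{R(q)^2}{R(q^2)}-\frac{R(q^2)}{R(q)^2}+1=\frac{(G(q^2)H(q^2))^2}{(G(q)H(q))^4},
\]
since multiplying this by $G(q)^2H(q)^2G(q^2)H(q^2)$ reproduces exactly $G(q^2)^3H(q^2)^3/(G(q)^2H(q)^2)$.

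For this reduced identity I would plug \eqref{RR-iden-1} in to obtain
\[
\frac{R(q)^2}{R(q^2)}-\frac{R(q^2)}{R(q)^2}+1=\frac{1-k}{1+k}-\frac{1+k}{1-k}+1=\frac{1-4k-k^2}{1-k^2},
\]
which by \eqref{RR-iden-2} is precisely $\varphi(-q)^2/\varphi(-q^5)^2$. Then the standard product evaluations $\varphi(-q)=(q;q)_\infty^2/(q^2;q^2)_\infty$ and $G(q)H(q)=(q^5;q^5)_\infty/(q;q)_\infty$ (the latter immediate from \eqref{RR-1}--\eqref{RR-2}), together with their dilates $q\mapsto q^2$ and $q\mapsto q^5$, show that both sides of the reduced identity equal $(q;q)_\infty^4(q^{10};q^{10})_\infty^2/\big((q^2;q^2)_\infty^2(q^5;q^5)_\infty^4\big)$. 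Alternatively, one can bypass $\varphi$ at this last step and pass from $(1-4k-k^2)/(1-k^2)$ directly to the $q$-product by reusing \eqref{useful-iden-4} and \eqref{useful-iden-3}, just as in the proof of Lemma~\ref{le:key-iden-2}.

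I do not expect a genuine obstacle here: the argument runs parallel to, and is slightly shorter than, the proof of Lemma~\ref{le:key-iden-2}. The only step requiring care is the last one, where one must track the exponents of $(q;q)_\infty$, $(q^2;q^2)_\infty$, $(q^5;q^5)_\infty$ and $(q^{10};q^{10})_\infty$ when rewriting the theta quotient $\varphi(-q)^2/\varphi(-q^5)^2$ and the Rogers--Ramanujan products as a single quotient of infinite products.
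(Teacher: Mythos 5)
Your proposal is correct and follows essentially the same route as the paper: factor out $G(q)^2H(q)^2G(q^2)H(q^2)$, reduce to $\tfrac{R(q)^2}{R(q^2)}-\tfrac{R(q^2)}{R(q)^2}+1=\tfrac{1-4k-k^2}{1-k^2}=\tfrac{\varphi(-q)^2}{\varphi(-q^5)^2}$ via \eqref{RR-iden-1} and \eqref{RR-iden-2}, and then identify the resulting eta-quotient with $G(q^2)^2H(q^2)^2/(G(q)H(q))^4$. The only difference is that you make explicit the final product verification that the paper leaves implicit, which is a welcome addition.
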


\begin{proof}
	We first prove
	\begin{equation}\label{eq:aux-step-1}
	\dfrac{R(q)^2}{R(q^2)}
	-\dfrac{R(q^2)}{R(q)^2}+1 = \dfrac{\varphi(-q)^2}{\varphi(-q^5)^2}.
	\end{equation}
	It follows from \eqref{RR-iden-1} that
	\begin{align*}
	\dfrac{R(q)^2}{R(q^2)}
	-\dfrac{R(q^2)}{R(q)^2}+1=\frac{1-k}{1+k}-\frac{1+k}{1-k}+1=\frac{1-4k-k^2}{1-k^2}.
	\end{align*}
	Therefore, \eqref{eq:aux-step-1} is a direct consequence of \eqref{RR-iden-2}. Finally,
	\begin{align*}
	&H(q)^4 G(q^{2})^2 -G(q)^4 H(q^{2})^2 + G(q)^2H(q)^2G(q^{2})H(q^{2})\\
	&\quad=G(q)^2H(q)^2G(q^{2})H(q^{2})\left(\dfrac{G(q^2)H(q)^2}{G(q)^2H(q^2)}
	-\dfrac{G(q)^2H(q^2)}{G(q^2)H(q)^2}+1\right)\\
	&\quad=G(q)^2H(q)^2G(q^{2})H(q^{2})\left(\dfrac{R(q)^2}{R(q^2)}
	-\dfrac{R(q^2)}{R(q)^2}+1\right).
	\end{align*}
	We arrive at the desired identity by substituting \eqref{eq:aux-step-1} into the above.
\end{proof}

\section{Proofs}

\subsection{Idea behind the proofs}

Before presenting our proofs, it appears necessary to explain the idea behind them.

\begin{itemize}[leftmargin=*]\itemsep3pt
	\item \textbf{Guessing the 5-dissection.} Given a formal power series $\sum_{n=0}^\infty a_nq^n$, we may start by computing, for example, the first 1000 terms of each $m$-dissection slice $\sum_{n=0}^\infty a_{mn+\ell}q^n$ (with $0\le \ell\le m-1$). If the slice has a nice product form, then such a $q$-product could be induced through the \texttt{prodmake} command of Garvan's \textit{Maple} package \texttt{qseries} \cite{Gar1999}. In other words, we can take advantage of the package \texttt{qseries} to conjecture the 5-dissections of $R(q)R(q^2)^2$, $1/R(q)R(q^2)^2$, $R(q)^2/R(q^2)$ and $R(q^2)/R(q)^2$.
	
	In a recent work, Frye and Garvan \cite{FG2019} further implemented another two \textit{Maple} packages \texttt{thetaids} and \texttt{ramarobinsids} to provide automatic proofs of theta-function identities with the help of modular forms. It should be admitted that the above conjectured 5-dissections could be shown automatically through the package \texttt{thetaids}. The interested reader may consult \cite{XZ2020} for a recent application of this computer-assisted approach.
	
	On the other hand, elementary proof of dissection formulas often has its own research interest. In this section, we will present such proofs of \eqref{5-dis-3}--\eqref{5-dis-1} in which only identities involving the Rogers--Ramanujan functions $G(q)$ and $H(q)$ will be utilized. In particular, the trick of \textit{substitution of one} which will be discussed below makes our proof less routine.
	
	\item \textbf{Constructing an auxiliary product.} To start the proofs, we first need to reformulate $R(q)R(q^2)^2$, $1/R(q)R(q^2)^2$, $R(q)^2/R(q^2)$ and $R(q^2)/R(q)^2$ as
	\begin{align*}
	R(q)R(q^2)^2 &=\dfrac{(q^2,q^3;q^5)_\infty(q,q^9;q^{10})_\infty^2}	{(q,q^4;q^5)_\infty(q^3,q^7;q^{10})_\infty^2}
=\dfrac{H(q^2)^2}{G(q)^2H(q)}\bigg/\dfrac{G(q^2)^2}{G(q)H(q)^2},\\
	\dfrac{1}{R(q)R(q^2)^2} &=\dfrac{(q,q^4;q^5)_\infty(q^3,q^7;q^{10})_\infty^2}	{(q^2,q^3;q^5)_\infty(q,q^9;q^{10})_\infty^2}
=\dfrac{G(q^2)^2}{G(q)H(q)^2}\bigg/\dfrac{H(q^2)^2}{G(q)^2H(q)},\\
	\dfrac{R(q)^2}{R(q^2)} &=\dfrac{(q,q^4;q^5)_\infty^2(q^4,q^6;q^{10})_\infty}	{(q^2,q^3;q^5)_\infty^2(q^2,q^8;q^{10})_\infty}
=\dfrac{H(q)^2}{G(q^2)H(q^2)^2}\bigg/\dfrac{G(q)^2}{G(q^2)^2H(q^2)},\\
	\dfrac{R(q^2)}{R(q)^2} &=\dfrac{(q^2,q^3;q^5)_\infty^2(q^2,q^8;q^{10})_\infty}	{(q,q^4;q^5)_\infty^2(q^4,q^6;q^{10})_\infty}
=\dfrac{G(q)^2}{G(q^2)^2H(q^2)}\bigg/\dfrac{H(q)^2}{G(q^2)H(q^2)^2}.
	\end{align*}
	As long as we have conjectured the 5-dissections through Garvan's \textit{Maple} package \texttt{qseries}, then in light of the above, it suffices to prove
\begin{align*}
\textrm{Dissection} = \textrm{Numerator}/\textrm{Denominator},
\end{align*}
	or
\begin{align*}
\textrm{Denominator}\times \textrm{Dissection} = \textrm{Numerator},
\end{align*}
	where the ``$\textrm{Numerator}$'' and ``$\textrm{Denominator}$'' come from the right-hand sides of the above reformulations. Now we multiply by a simple extra term on both sides of the above,
\begin{align*}
\textrm{Denominator}\times \textrm{Dissection}\times [\textrm{extra term}]
=\textrm{Numerator}\times [\textrm{extra term}].
\end{align*}
Our auxiliary products then come from here. More precisely, these auxiliary products have the form
\begin{align*}
\Pi_*=\Pi_{*1}\times \Pi_{*2}
\end{align*}
	such that $\Pi_{*1}=\textrm{Denominator}$ by Lemma \ref{le:DX-TX} and $\Pi_{*2}= \textrm{Dissection}\times [\textrm{extra term}]$ by \eqref{useful-1} and \eqref{useful-2}.
Therefore, it suffices to verify
\begin{align*}
\Pi_*\stackrel{?}{=} \textrm{Numerator}\times [\textrm{extra term}].
\end{align*}
	
	\item \textbf{Substitution of one.} To complete the proofs, we want to highlight a trick which we call the \textit{substitution of one}. We first expand the auxiliary product $\Pi_*=\Pi_{*1}\times \Pi_{*2}$ and collect terms according to the power of $q$,
\begin{align*}
\Pi_*=\Sigma_{*1}+q^5\Sigma_{*2}
\end{align*}
	such that in both $\Sigma_{*1}$ and $\Sigma_{*2}$, the powers of $q$ are up to $q^4$. We then see from \eqref{useful-3} that
	\begin{align}\label{eq:hir-17.4.12}
	\frac{G(q^5)G(q^{10})^2}{q^5H(q^5)H(q^{10})^2}\cdot
	\frac{G(q^5)^2H(q^{10})-H(q^5)^2G(q^{10})}{G(q^5)^2H(q^{10})+H(q^5)^2G(q^{10})}
	=1.
	\end{align}
	Hence, we may rewrite $\Pi_*$ as
\begin{align*}
\Pi_* =\Sigma_{*1}+q^5\Sigma_{*2}\times\Bigg(\frac{G(q^5)G(q^{10})^2}{q^5H(q^5)H(q^{10})^2}\cdot
	\frac{G(q^5)^2H(q^{10})-H(q^5)^2G(q^{10})}{G(q^5)^2H(q^{10})+H(q^5)^2G(q^{10})}\Bigg).
\end{align*}
	Expanding and then factoring the above, we arrive at an expression where either Lemma \ref{le:key-iden-2} or Lemma \ref{le:key-iden-1} could be applied. Finally, we deduce $\Pi_*= \textrm{Numerator}\times [\textrm{extra term}]$ through Lemma \ref{le:DX-TX}.
\end{itemize}

\begin{remark}
	It is notable that the trick of \textit{substitution of one} has also been used in \cite{CT2020, Tan2020}. However, in that work, $1$ is substituted by a different expression of the Rogers--Ramanujan functions $G(q)$ and $H(q)$ instead of \eqref{eq:hir-17.4.12}.
\end{remark}

We will use \eqref{5-dis-3} to present a detailed proof while the rest could be shown analogously.

\subsection{Proof of (\ref{5-dis-3})}

We define the auxiliary product by $\Pi_1:=\Pi_{11}\times \Pi_{12}$ where
\begin{align*}
\Pi_{11}&=G(q^5)G(q^{10})^2-qH(q^5)G(q^{10})^2\\
&\quad-2q^3H(q^5)G(q^{10})H(q^{10})+q^4G(q^5)H(q^{10})^2
\end{align*}
and
\begin{align*}
\Pi_{12}&=4G(q^5)^2H(q^5)^3G(q^{10})^3H(q^{10})^3\notag\\
&\quad-2qG(q^5)H(q^5)^2G(q^{10})^2H(q^{10})^3\cdot
\left(G(q^5)^2H(q^{10})+H(q^5)^2G(q^{10})\right)\notag\\
&\quad-2q^2G(q^5)^2H(q^5)G(q^{10})H(q^{10})^4\cdot
\left(G(q^5)^2H(q^{10})+H(q^5)^2G(q^{10})\right)\notag\\
&\quad+4q^3G(q^5)H(q^5)^2G(q^{10})H(q^{10})^4\cdot
\left(G(q^5)^2H(q^{10})+H(q^5)^2G(q^{10})\right)\notag\\
&\quad+q^4H(q^5)H(q^{10})^4\cdot
\left(G(q^5)^2H(q^{10})+H(q^5)^2G(q^{10})\right)\notag\\
&\quad\quad\times\left(G(q^5)^2H(q^{10})-H(q^5)^2G(q^{10})\right).
\end{align*}
Then by \eqref{DX-iden-1}, we have
\begin{align}\label{Pi_11}
\Pi_{11}=\dfrac{G(q^2)^2}{G(q)H(q)^2}.
\end{align}
Also, in light of \eqref{useful-1} and \eqref{useful-2},
\begin{align}\label{Pi_12}
\Pi_{12} &=4G(q^5)^4H(q^5)^3\notag\\
&\quad\times\Bigg(\dfrac{G(q^{10})^3H(q^{10})^3}{G(q^5)^2}
-\dfrac{qG(q^{10})^4H(q^{10})^3}{G(q^5)^2H(q^5)(q^{25};q^{50})_\infty^2}
-\dfrac{q^2G(q^{10})^3H(q^{10})^4}{G(q^5)H(q^5)^2(q^{25};q^{50})_\infty^2}\notag\\
&\quad\quad\quad+\dfrac{2q^3G(q^{10})^3H(q^{10})^4}{G(q^5)^2H(q^5)(q^{25};q^{50})_\infty^2}
+\dfrac{q^9G(q^{10})^2H(q^{10})^6}{G(q^5)^3H(q^5)(q^{25};q^{50})_\infty^4}\Bigg).
\end{align}

On the other hand, we expand the product in $\Pi_1$ and rearrange terms so that $\Pi_1 = \Sigma_{11}+q^5\Sigma_{12}$ where
\begin{align*}
\Sigma_{11}&=4G(q^5)^3H(q^5)^3G(q^{10})^5H(q^{10})^3\\
&\quad-2q\big(3G(q^5)^2H(q^5)^4G(q^{10})^5H(q^{10})^3
+G(q^5)^4H(q^5)^2G(q^{10})^4H(q^{10})^4\big)\\
&\quad+2q^2\big(G(q^5)H(q^5)^5G(q^{10})^5H(q^{10})^3
-G(q^5)^5H(q^5)G(q^{10})^3H(q^{10})^5\big)\\
&\quad-2q^3\big(G(q^5)^2H(q^5)^4G(q^{10})^4H(q^{10})^4
-3G(q^5)^4H(q^5)^2G(q^{10})^3H(q^{10})^5\big)\\
&\quad-q^4\big(G(q^5)H(q^5)^5G(q^{10})^4H(q^{10})^4
-4G(q^5)^3H(q^5)^3G(q^{10})^3H(q^{10})^5\\
&\qquad-G(q^5)^5H(q^5)G(q^{10})^2H(q^{10})^6\big)\\
\intertext{and}
\Sigma_{12}&=\big(H(q^5)^6G(q^{10})^4H(q^{10})^4
+2G(q^5)^2H(q^5)^4G(q^{10})^3H(q^{10})^5\\
&\qquad+G(q^5)^4H(q^5)^2G(q^{10})^2H(q^{10})^6\big)\\
&\quad-2q\big(4G(q^5)H(q^5)^5G(q^{10})^3H(q^{10})^5+
5G(q^5)^3H(q^5)^3G(q^{10})^2H(q^{10})^6\\
&\qquad+G(q^5)^5 H(q^5)G(q^{10})H(q^{10})^7\big)\\
&\quad+2q^2\big(H(q^5)^6G(q^{10})^3H(q^{10})^5
+2G(q^5)^2H(q^5)^4G(q^{10})^2H(q^{10})^6\\
&\qquad+G(q^5)^4H(q^5)^2G(q^{10})H(q^{10})^7\big)\\
&\quad-q^3\big(G(q^5)H(q^5)^5G(q^{10})^2H(q^{10})^6
-G(q^5)^5H(q^5)H(q^{10})^8\big).
\end{align*}
By \eqref{eq:hir-17.4.12}, we rewrite $\Pi_1$ as
\begin{align*}
\Pi_1 &=\Sigma_{11}+q^5\Sigma_{12}\left(\frac{G(q^5)G(q^{10})^2}{q^5H(q^5)H(q^{10})^2}\cdot
\frac{G(q^5)^2H(q^{10})-H(q^5)^2G(q^{10})}{G(q^5)^2H(q^{10})+H(q^5)^2G(q^{10})}\right)\\
&=G(q^5)G(q^{10})^2H(q^{10})^2\\
&\quad\times\left(G(q^5)^4H(q^{10})^2-H(q^5)^4G(q^{10})^2
+4G(q^5)^2H(q^5)^2G(q^{10})H(q^{10})\right)\\
&\quad\times\big(G(q^{10})^2H(q^5)-2qG(q^5)G(q^{10})H(q^{10})\\
&\qquad+q^3G(q^5)H(q^{10})^2+q^4H(q^5)H(q^{10})^2\big).
\end{align*}
Applying \eqref{key-iden-2} to the second factor and \eqref{DX-iden-2} to the third factor, we find that
\begin{align}\label{Pi-1-iden-1}
\Pi_1 &=\dfrac{4(q^{50};q^{50})_\infty^4}{(q^{25};q^{25})_\infty^4}
\cdot G(q^5)^4H(q^5)^3\cdot\dfrac{H(q^2)^2}{G(q)^2H(q)}.
\end{align}

Therefore, \eqref{5-dis-3} follows (after simplification) from \eqref{Pi_11}, \eqref{Pi_12} and \eqref{Pi-1-iden-1} together with the fact that
$$R(q)R(q^2)^2 =\dfrac{H(q^2)^2}{G(q)^2H(q)}\bigg/\dfrac{G(q^2)^2}{G(q)H(q)^2}.$$

\subsection{The remaining dissections}

We list the required auxiliary products in the proofs of \eqref{5-dis-4}, \eqref{5-dis-2} and \eqref{5-dis-1}. The calculations are similar to those for \eqref{5-dis-3} and therefore the details are omitted.

\begin{itemize}[leftmargin=*]\itemsep3pt
	\item \textit{Dissection \eqref{5-dis-4}}. We require the auxiliary product $\Pi_2 = \Pi_{21}\times \Pi_{22}$ where
	\begin{align*}
	\Pi_{21}&=H(q^5)G(q^{10})^2-2qG(q^5)G(q^{10})H(q^{10})\\
	&\quad
	+q^3G(q^5)H(q^{10})^2+q^4H(q^5)H(q^{10})^2
	\end{align*}
	and
	\begin{align*}
	\Pi_{22}&=4G(q^5)^4H(q^5)G(q^{10})^3H(q^{10})^3\notag\\
	&\quad+qG(q^5)G(q^{10})^2H(q^{10})^2\cdot
	\left(G(q^5)^2H(q^{10})+H(q^5)^2G(q^{10})\right)^2\notag\\
	&\quad+4q^2G(q^5)^2H(q^5)G(q^{10})^2H(q^{10})^3\cdot
	\left(G(q^5)^2H(q^{10})+H(q^5)^2G(q^{10})\right)\notag\\
	&\quad+2q^3G(q^5)H(q^5)^2G(q^{10})^2H(q^{10})^3\cdot
	\left(G(q^5)^2H(q^{10})+H(q^5)^2G(q^{10})\right)\notag\\
	&\quad+2q^4G(q^5)^2H(q^5)G(q^{10})H(q^{10})^4\cdot
	\left(G(q^5)^2H(q^{10})+H(q^5)^2G(q^{10})\right).
	\end{align*}
	
	\item \textit{Dissection \eqref{5-dis-2}}. We require the auxiliary product $\Pi_3 = \Pi_{31}\times \Pi_{32}$ where
	\begin{align*}
	\Pi_{31}&=G(q^5)^2G(q^{10})+2qG(q^5)H(q^5)G(q^{10})\\
	&\quad+q^2G(q^5)^2H(q^{10})+q^4H(q^5)^2H(q^{10})
	\end{align*}
	and
	\begin{align*}
	\Pi_{32}&=G(q^5)^2H(q^5)^6G(q^{10})^3\notag\\
	&\qquad-qG(q^5)^3H(q^5)^3G(q^{10})H(q^{10})\cdot
	\left(G(q^5)^2H(q^{10})+H(q^5)^2G(q^{10})\right)\notag\\	
	&\qquad+2q^2G(q^5)^2H(q^5)^4G(q^{10})H(q^{10})
	\cdot\left(G(q^5)^2H(q^{10})+H(q^5)^2G(q^{10})\right)\notag\\	
	&\qquad-q^3G(q^5)H(q^5)^3H(q^{10})
	\cdot\left(G(q^5)^2H(q^{10})+H(q^5)^2G(q^{10})\right)^2\notag\\ &\qquad+q^4G(q^5)^2H(q^5)^4H(q^{10})^2\cdot\left(G(q^5)^2H(q^{10})
	+H(q^5)^2G(q^{10})\right).
	\end{align*}
	
	\item \textit{Dissection \eqref{5-dis-1}}. We require the auxiliary product $\Pi_4 = \Pi_{41}\times \Pi_{42}$ where
	\begin{align*}
	\Pi_{41}&=G(q^5)^2G(q^{10})+q^2H(q^5)^2G(q^{10})\\
	&\quad+2q^3G(q^5)H(q^5)H(q^{10})-q^4H(q^5)^2H(q^{10})
	\end{align*}
	and
	\begin{align*}
	\Pi_{42}&=G(q^5)^6H(q^5)^2G(q^{10})H(q^{10})^2\notag\\
	&\qquad+qG(q^5)^3H(q^5)^3G(q^{10})H(q^{10})\cdot\left(G(q^5)^2H(q^{10})
	+H(q^5)^2G(q^{10})\right)\notag\\
	&\qquad-q^2G(q^5)^2H(q^5)^2H(q^{10})\cdot\left(G(q^5)^2H(q^{10})
	+H(q^5)^2G(q^{10})\right)\notag\\
	&\qquad\quad\times
	\left(G(q^5)^2H(q^{10})-H(q^5)^2G(q^{10})\right)\notag\\
	&\qquad-2q^3G(q^5)^3H(q^5)^3H(q^{10})^2
	\cdot\left(G(q^5)^2H(q^{10})+H(q^5)^2G(q^{10})\right)\notag\\
	&\qquad-q^4G(q^5)^2H(q^5)^4H(q^{10})^2
	\cdot\left(G(q^5)^2H(q^{10})+H(q^5)^2G(q^{10})\right).
	\end{align*}
\end{itemize}

\subsection*{Acknowledgements}

The authors would like to thank Mike Hirschhorn for some helpful suggestions. The second author was supported by the Postdoctoral Science Foundation of China (No.~2019M661005).

\bibliographystyle{amsplain}

\end{document}